\def\nn{\nonumber}
\def\sk{\vspace{1mm}}
\def\bbK{\mathbb{K}}
\def\bbC{\mathbb{C}}
\def\bbN{\mathbb{N}}
\def\FF{\mathcal{F}}
\def\RR{\mathcal{R}}
\def\MMM{\mathscr{M}}
\def\AAA{\mathscr{A}}
\newcommand{\MMMod}[3]{{}^{#1}{}_{#2}\MMM{\!}_{#3}}
\newcommand{\AAAlg}[3]{{}^{#1}{}_{#2}\AAA{\!}_{#3} }
\def\Hom{\mathrm{Hom}}
\def\End{\mathrm{End}}
\def\Con{\mathrm{Con}}
\def\id{\mathrm{id}}
\def\deg{\mathrm{deg}}
\def\dd{\mathrm{d}}
\def\nab{\nabla}
\def\cop{\mathrm{cop}}
\theoremstyle{plain}
\newtheorem{theo}{{\bf Theorem}}[section]
\newtheorem{lem}[theo]{{\bf Lemma}}
\newtheorem{propo}[theo]{{\bf Proposition}}
\theoremstyle{definition}
\newtheorem{defi}[theo]{{\bf Definition}}
\newtheorem{ex}[theo]{{\bf Example}}
\theoremstyle{remark}
\newtheorem{rem}[theo]{{\bf Remark}}
\def\nab{\nabla}
\def\ra{\triangleright}
\def\RA{\blacktriangleright}
\title{Twist deformations of module homomorphisms and connections \thanks{Paper based on joint work with Paolo Aschieri}}
\author{\speaker{Alexander Schenkel}\\
      Fachgruppe Mathematik\\
Bergische~Universit\"at~Wuppertal,~Gau\ss stra\ss e~20,~42119~Wuppertal,~Germany\\
       E-mail: \email{schenkel@math.uni-wuppertal.de}}
\abstract{
Let $H$ be a Hopf algebra, $A$ a left $H$-module algebra and $V$ a left $H$-module $A$-bimodule.
 We study the behavior of the right $A$-linear endomorphisms of $V$ under twist deformation.
 We in particular construct a bijective quantization map to the right $A_\star$-linear endomorphisms
 of $V_\star$,  with $A_\star,V_\star$ denoting the usual twist deformations of $A,V$.
 The quantization map is extended to right $A$-linear homomorphisms between 
 left $H$-module $A$-bimodules and to right connections on $V$.
 We then investigate the tensor product of linear maps between left $H$-modules. Given a quasitriangular Hopf algebra
 we can define an $H$-covariant tensor product of linear maps, which restricts for left $H$-module $A$-bimodules
 to a well-defined tensor product of right $A$-linear homomorphisms on tensor product modules over $A$. This also
 requires a quasi-commutativity condition on the algebra and bimodules.
 Using this tensor product we can construct a new lifting prescription of connections to tensor product
  modules, generalizing the usual prescription to also include nonequivariant connections.
 
}
\begin{document}


\section{\label{sec:intro}Introduction}
In this proceedings article I will report on a  joint work with Paolo Aschieri \cite{ASCHIERISCHENKEL}
aimed at understanding better the mathematical structures behind noncommutative gravity.
Our work is based on the approach initiated by 
Julius Wess and his group \cite{Aschieri:2005yw,Aschieri:2005zs}, in which a particular emphasis is given to
the deformed Hopf algebra of diffeomorphisms.
For exact solutions of the noncommutative Einstein equations see \cite{Schupp:2009pt,Ohl:2009pv,Aschieri:2009qh} 
and for quantum field theory on noncommutative curved spacetimes based on this formalism
see \cite{Ohl:2009qe,Schenkel:2010sc,Schenkel:2010jr,Schenkel:2011gw}.
Other recent mathematical developments in noncommutative (and also nonassociative) differential geometry and 
Riemannian geometry  can be found in the papers of Beggs and Majid \cite{BeggsMajid1,BeggsMajid2}.

Let us briefly review the basic idea of the approach of Wess et al.~\cite{Aschieri:2005yw,Aschieri:2005zs}:
The starting point is a smooth manifold $M$, which will be later subject to deformation quantization.
Associated to $M$ there is the  algebra $C^\infty(M)$ of smooth complex-valued functions
and the $C^\infty(M)$-modules $\Xi := \Gamma^\infty(TM)$ and $\Omega:=\Gamma^\infty(T^\ast M)$
of smooth complex-valued sections of the tangent bundle $TM$ and cotangent bundle $T^\ast M$, respectively. In fact,
$\Xi$ and $\Omega$ are $C^\infty(M)$-bimodules and this bimodule structure
is essential for considering tensor fields, which are tensor products over $C^\infty(M)$ of $C^\infty(M)$-bimodules.
The infinitesimal diffeomorphisms of $M$ are described by
the Lie algebra of vector fields $(\Xi,[\cdot,\cdot])$, or equivalently by the universal enveloping algebra
$U\Xi$, carrying a canonical Hopf algebra structure. The Hopf algebra $U\Xi$ acts canonically 
via the Lie derivative on $C^\infty(M)$, $\Xi$ and $\Omega$, giving $C^\infty(M)$ the structure of a left
$U\Xi$-module algebra and $\Xi$, $\Omega$ the structure of a left $U\Xi$-module $C^\infty(M)$-bimodule.
Based on this algebraic formulation of classical differential geometry, the basic 
idea of \cite{Aschieri:2005yw,Aschieri:2005zs} is to use well-known Hopf algebra
deformation methods (see e.g.~\cite{Majid:1996kd}) to go over to the realm of noncommutative 
geometries. In short,
the Hopf algebra of diffeomorphisms is deformed by a  Drinfel'd twist $\FF$ \cite{Drinfeld} resulting
in a new Hopf algebra $U\Xi^\FF$.
Demanding covariance under $U\Xi^\FF$ requires us to deformed all 
spaces the Hopf algebra $U\Xi$ acts on,
in particular $C^\infty(M)$, $\Xi$ and $\Omega$ are respectively deformed
into a left $U\Xi^\FF$-module algebra $C^\infty(M)_\star$ and  left $U\Xi^\FF$-module $C^\infty(M)_\star$-bimodules
$\Xi_\star$, $\Omega_\star$. This deformation induces a quantization in
the sense that, in general, the resulting deformed algebra $C^\infty(M)_\star$ is not commutative. 
Based on these basic deformed objects a noncommutative gravity theory is constructed, in particular, 
covariant derivatives, metrics, torsion and curvature are introduced. 

As a mathematical abstraction of the setting above we may  study general Hopf algebras
$H$ and general left $H$-module algebras $A$, which we shall interpret as noncommutative symmetries and spaces. 
Noncommutative vector bundles (equipped with a left $H$-action) over these noncommutative spaces
are then described by left $H$-module $A$-bimodules.
There are different choices for morphisms between noncommutative vector bundles:
We could in principle consider the set of morphisms respecting all module structures, i.e.~left $H$-module $A$-bimodule 
homomorphisms. This choice is inconvenient for gravity theories, since if we interpret the metric field 
as a morphism from the module of vector fields to its dual (the module one-forms), it
 is certainly not a $U\Xi$-equivariant map and thus not part of this set of morphisms. A convenient and 
 much more flexible alternative is to consider the set of right $A$-linear homomorphisms between left $H$-module 
$A$-bimodules (equivalently we could also consider left $A$-linear homomorphisms). 
Let us use the short notation $\Hom_A$ for right $A$-linear homomorphisms between  left $H$-module 
$A$-bimodules.

Given the fact that the noncommutative gravity theory in \cite{Aschieri:2005yw,Aschieri:2005zs}
 is obtained by deforming classical gravity it is interesting to study the behavior of
 $\Hom_A$-morphisms under twist deformations. On one hand, we can apply general Hopf algebra
 deformation methods in order to deform $\Hom_A$-morphisms and on the other hand
 we can consider $\Hom_{A_\star}$-morphisms on the deformed left $H^\FF$-module
 $A_\star$-bimodules. We relate these a priori different constructions via a structure preserving
 isomorphism, thus showing that up to isomorphism there is only one twist quantization of
 $\Hom_A$-morphisms. A consequence of this isomorphism is a quantization map, yielding
 a bijective correspondence between $\Hom_A$ and $\Hom_{A_\star}$-morphisms.
 As a remark, our isomorphism agrees with the one recently found by Kulish and Mudrov
\cite{Kulish:2010mr}, which is a generalization of the isomorphism $D$ in \cite{Aschieri:2005zs}.
 
 In addition to $\Hom_A$-morphisms also connections on noncommutative vector bundles
 play a prominent role in noncommutative gravity. Again, assuming these connections to be compatible
 with the complete left $H$-module $A$-bimodule structure is too restrictive. Because of this
 we make the convenient choice of connections compatible with only the right $A$-module structure,
 i.e.~connections satisfying a Leibniz rule with respect to the right $A$-action.
 We use the short notation $\Con_A$ for these connections. We also construct a quantization map
 in this case, yielding a bijective correspondence between $\Con_A$ and $\Con_{A_\star}$-connections,
 the latter satisfying a Leibniz rule with respect to the right $A_\star$-action on the
 deformed left $H^\FF$-module $A_\star$-bimodules.

Given $\Hom_A$-morphisms and $\Con_A$-connections
on left $H$-module $A$-bimodules it is important to understand
their lifts to tensor product modules. Since our homomorphisms and
connections are not assumed to be $H$-equivariant, these lifting prescriptions
become more complicated and we require further structures.
For quasitriangular Hopf algebras we have a braiding isomorphism
on tensor products of left $H$-modules allowing us to define a
tensor product of linear maps, which is compatible with the $H$-action.
For $H$-equivariant maps this tensor product reduces to the usual tensor product
of linear maps.
Our tensor product restricts to a well-defined tensor product of
$\Hom_A$-morphisms on tensor product modules over $A$, if
we assume $A$ and the left $H$-module $A$-bimodules to be quasi-commutative.
Similarly, we can define a sum of $\Con_A$-connections giving a prescription
to associate to two $\Con_A$-connections a $\Con_A$-connection on the tensor product module over $A$.
This also requires a quasitriangular Hopf algebra and quasi-commutative algebras and bimodules.
For the special case of $H$-equivariant connections we recover the well-known tensor product
connection of \cite{Mourad:1994xa,DuViMasson,DuViLecture}. In this sense we provide a generalization
of the usual lifting formula, which is also applicable to non $H$-equivariant
connections. In the language of \cite{Mourad:1994xa,DuViMasson,DuViLecture} we can say that
our sum of connections can not only be applied to right bimodule connections, but also to generic
right connections on bimodules.

The organization of this paper is as follows: 
In Section \ref{sec:basics} we fix the notation and review some basics on modules, algebras, Hopf algebras and
 twist deformations required for the main text. Extensive reviews can be found for example in the monographs 
\cite{Majid:1996kd,Kassel:1995xr}.
In Section \ref{sec:hom} we focus on the twist deformation of
module homomorphisms and endomorphisms. In particular we provide a quantization map
from $\Hom_A$-morphisms to $\Hom_{A_\star}$-morphisms. The twist deformation of connections is discussed
 in Section \ref{sec:con}  and a quantization map is explicitly constructed.
In Section \ref{sec:homlift} we provide a covariant lifting prescription for homomorphisms
to tensor product modules. For quasi-commutative algebras and bimodules we
show that the lifting prescription induces a well-defined tensor product of $\Hom_A$-morphisms 
on tensor product modules over $A$.
The sum of $\Con_A$-connections is discussed in Section \ref{sec:conlift}.


\section{\label{sec:basics}Preliminaries on modules, algebras, Hopf algebras and twist deformations}
\subsection{Algebraic preliminaries}
We fix the notation and introduce the algebraic structures relevant for this article.
We denote by $\bbK$ a fixed commutative and unital ring. This in particular includes the case of $\bbK =\bbC$ used in 
commutative differential geometry and $\bbK=\bbC[[h]]$ (the formal power series extension of
 the complex numbers in an indeterminate $h$) used in deformation quantization.
 
 A {\bf $\bbK$-module} is an abelian group $A$ with an action $\bbK\times A\to A\,,~(\lambda,a)\mapsto\lambda\,a$,
  such that, for all $\lambda,\tilde\lambda\in\bbK$ and $a,b\in A$,
 \begin{flalign}
 (\lambda\,\tilde\lambda)\,a = \lambda\,(\tilde\lambda a)~,\quad \lambda\,(a+b)=\lambda\,a+\lambda\,b~,\quad (\lambda+\tilde\lambda)\,a = \lambda\,a +\tilde\lambda\,a ~,\quad 1\,a = a~.
 \end{flalign}
A {\bf $\bbK$-module homomorphism} (or {\bf $\bbK$-linear map}) between the $\bbK$-modules
$A$ and $B$ is a homomorphism $\varphi:A\to B$ of the abelian groups, such that, for all $\lambda\in\bbK$ and $a\in A$,
$\varphi(\lambda\,a) = \lambda\,\varphi(a)$. 

An {\bf algebra} is a $\bbK$-module $A$  with a $\bbK$-linear map $\mu:A\otimes A\to A$ (product).
Here and in the following $\otimes$ is the tensor product over $\bbK$.
We denote by $a\otimes b$ the image of $(a,b)$ under the canonical $\bbK$-bilinear map $A\times A\to A\otimes A$
and simply write for the product $\mu(a\otimes b) = a\,b$. The algebra $A$ is associative if, for all $a,b,c\in A$,
$a\,(b\,c) = (a\,b)\,c$ and unital if there exists a unit element $1\in A$, such that, for all $a\in A$, $1\,a = a\,1 = a$.
If not stated otherwise, algebras will always be associative and unital.

\begin{defi}\label{defi:HA}
A {\bf Hopf algebra}  is an associative and unital algebra $H$ together with two algebra homomorphisms
$\Delta: H\to H\otimes H$ (coproduct), $\epsilon: H\to \bbK$ (counit) and a $\bbK$-linear map
$S:H\to H$ (antipode), such that, for all $\xi\in H$,
\begin{subequations}\label{eqn:HAprop}
\begin{flalign}
(\Delta\otimes\id)\Delta(\xi) &= (\id\otimes\Delta)\Delta(\xi)~,\quad\text{(coassociativity)}\\
(\epsilon\otimes\id)\Delta(\xi) &= (\id\otimes\epsilon)\Delta(\xi) = \xi~,\\
\mu\big((S\otimes\id)\Delta(\xi)\big) &= \mu\big((\id\otimes S)\Delta(\xi)\big) = \epsilon(\xi)\,1~.
\end{flalign}
\end{subequations}
The product in $H\otimes H$ is defined by, for all $\xi,\zeta,\tilde\xi,\tilde\zeta\in H$,
\begin{flalign}
(\xi\otimes\zeta) \,(\tilde\xi\otimes\tilde\zeta) = (\xi\,\tilde\xi)\otimes(\zeta\,\tilde\zeta)~.
\end{flalign}
\end{defi}
\sk

It is convenient to introduce a short notation (Sweedler's notation) for the coproduct,
for all $\xi\in H$, $\Delta(\xi) = \xi_1\otimes\xi_2$ (sum understood). 
The Hopf algebra properties (\ref{eqn:HAprop}) then read
\begin{subequations}
\begin{flalign}
\xi_{1_1}\otimes\xi_{1_2} \otimes \xi_2 &= \xi_1\otimes\xi_{2_1}\otimes\xi_{2_2}=:\xi_1\otimes\xi_2\otimes\xi_3~,\\
\epsilon(\xi_1)\, \xi_2 &= \xi_1\,\epsilon(\xi_2)  = \xi~,\\
S(\xi_1)\,\xi_2 &= \xi_1\,S(\xi_2) = \epsilon(\xi)\,1~.
 \end{flalign} 
\end{subequations}
Likewise we denote the three times iterated application of $\Delta$ on $\xi$
by $\xi_1\otimes\xi_2\otimes\xi_3\otimes\xi_4$.
Further standard properties of the antipode which follow from Definition \ref{defi:HA}
are (see e.g.~\cite{Majid:1996kd}), for all $\xi,\zeta\in H$,
$S(\xi\,\zeta) = S(\zeta)\,S(\xi)$, $S(1)=1$, $S(\xi_1)\otimes S(\xi_2) = S(\xi)_2\otimes S(\xi)_1$
and $\epsilon(S(\xi)) = \epsilon(\xi)$.
 
 \begin{defi}
 A {\bf left module over an algebra $A$} (or a {\bf left $A$-module}) is a $\bbK$-module
 $V$ together with a $\bbK$-linear map $\cdot : A\otimes V\to V$ satisfying, for all $a,b\in A$ and $v\in V$,
 \begin{flalign}
 a\cdot (b\cdot v) = (a\,b)\cdot v~,\quad 1\cdot v = v~.
 \end{flalign}
 The class of left $A$-modules is denoted by $\MMMod{}{A}{}$.
 
 Analogously, a {\bf right $A$-module} is a $\bbK$-module  $V$ with a $\bbK$-linear map
 $\cdot:V\otimes A \to V$ satisfying, for all $a,b\in A$ and $v\in V$,
 \begin{flalign}
( v\cdot a)\cdot b = v\cdot(a\,b)~,\quad v\cdot 1 =v~.
 \end{flalign} 
 The class of right $A$-modules is denoted by $\MMMod{}{}{A}$.
 
 An {\bf $A$-bimodule} is a left and a right $A$-module $V$
 satisfying the compatibility condition, for all $a,b\in A$ and $v\in V$,
 \begin{flalign}
 (a\cdot v)\cdot b = a\cdot (v\cdot b)~.
 \end{flalign}
 The class of $A$-bimodules is denoted by $\MMMod{}{A}{A}$.
 \end{defi}
\sk

The algebra $A$ can itself be a module over another algebra $H$. If $H$ is further a Hopf algebra, 
we have the notion of an $H$-module algebra, expressing a covariant transformation behavior of
$A$ under $H$.
\begin{defi}
Let $H$ be a Hopf algebra. A {\bf left $H$-module algebra} is an algebra $A$ that is also
a left $H$-module (with $H$-action denoted by $\ra$), such that, for all $a,b\in A$ and $\xi\in H$,
\begin{flalign}
\xi\ra (a\,b) =( \xi_1\ra a)\,(\xi_2\ra b)~,\quad \xi\ra 1 =\epsilon(\xi)\,1~.
\end{flalign}
The class of left $H$-module algebras is denoted by $\AAAlg{H}{}{}$.

Let $A\in\AAAlg{H}{}{}$. A {\bf left $H$-module $A$-bimodule} is
an $A$-bimodule $V$ that is also a left $H$-module (with $H$-action denoted by $\ra$), such that, 
for all $a\in A$, $v\in V$ and $\xi\in H$,
\begin{flalign}\label{eqn:leftHmoduleABbimodule}
\xi\ra (a\cdot v) = (\xi_1\ra a)\cdot (\xi_2\ra v)~,\quad \xi\ra(v\cdot a) = (\xi_1\ra v)\cdot (\xi_2\ra a)~.
\end{flalign} 
The class of left $H$-module $A$-bimodules is denoted by $\MMMod{H}{A}{A}$.

An algebra $E$ is a {\bf left $H$-module $A$-bimodule algebra}, if $E$ is a left $H$-module $A$-bimodule
and also a left $H$-module algebra. The class of left $H$-module $A$-bimodule algebras is denoted by
$\AAAlg{H}{A}{A}$.
\end{defi}
\sk

The classes $\MMMod{H}{A}{}$ and $\MMMod{H}{}{A}$ are defined analogously by restricting
(\ref{eqn:leftHmoduleABbimodule}) respectively to the first or second condition.
 
\begin{ex}\label{ex:classicalgeometry}
Consider the universal enveloping algebra $U\Xi$ associated to the Lie algebra of vector fields $\Xi$
on a smooth manifold $M$. $U\Xi$ has a natural Hopf algebra structure as explained e.g.~in \cite{Aschieri:2005zs}.

The space of vector fields $\Xi$ (and also the space of one-forms $\Omega$) is a bimodule over $C^\infty(M)$, 
i.e.~$\Xi,\Omega\in \MMMod{}{C^\infty(M)}{C^\infty(M)}$. The vector fields $\Xi$ act as derivations on $C^\infty(M)$,
turning $C^\infty(M)$ canonically into a left $U\Xi$-module algebra, i.e.~$C^\infty(M)\in \AAAlg{U\Xi}{}{}$.
The Lie derivative $\mathcal{L}$ on $\Xi$ and $\Omega$ turns these modules canonically into left $U\Xi$-module
$C^\infty(M)$-bimodules, i.e.~$\Xi,\Omega\in\MMMod{U\Xi}{C^\infty(M)}{C^\infty(M)}$.

The twist deformation procedure explained in the next subsection provides the examples relevant for noncommutative
gravity.
\end{ex}


\subsection{Twist deformations}
\begin{defi}
Let $H$ be a Hopf algebra. A {\bf twist} $\FF$ is an element $\FF\in H\otimes H$
that is invertible and that satisfies 
\begin{subequations}\label{eqn:twistprop}
\begin{flalign}
\label{eqn:twistprop1}\FF_{12}\,(\Delta\otimes\id) \FF = \FF_{23}\,(\id\otimes \Delta)\FF~,\quad\text{(2-cocycle property)}\\
\label{eqn:twistprop2}(\epsilon\otimes\id)\FF = 1= (\id\otimes\epsilon)\FF ~,\quad\text{(normalization property)}
\end{flalign}
\end{subequations}
where $\FF_{12}=\FF\otimes 1$ and $\FF_{23} = 1\otimes\FF$.
\end{defi}
\sk
We frequently use the notation (sum over $\alpha$ understood)
\begin{flalign}\label{eqn:twistnot}
\FF=f^\alpha\otimes f_\alpha~,\quad \FF^{-1} = \bar f^\alpha\otimes\bar f_\alpha~.
\end{flalign}
The properties (\ref{eqn:twistprop}) and the inverse of (\ref{eqn:twistprop1}) read in this notation
\begin{subequations}
\begin{flalign}
f^\beta f^\alpha_1\otimes f_\beta f^\alpha_2 \otimes f_\alpha &= f^\alpha\otimes f^\beta f_{\alpha_1}\otimes f_\beta f_{\alpha_2}~,\\
\epsilon(f^\alpha)\,f_\alpha &= 1 = f^\alpha\,\epsilon(f_\alpha)~,\\
\bar f^\alpha_1\bar f^\beta\otimes\bar f^\alpha_2\bar f_\beta \otimes \bar f_\alpha &= \bar f^\alpha\otimes\bar f_{\alpha_1}\bar f^\beta \otimes\bar f_{\alpha_2}\bar f_\beta~.
\end{flalign}
\end{subequations}

We now recall how a twist $\FF$ of a Hopf algebra $H$ induces a deformation $H^\FF$ of the Hopf algebra and all its
$H$-modules that become $H^\FF$-modules. In particular, $H$-module algebras are deformed into $H^\FF$-module algebras
and commutative ones are typically deformed into noncommutative ones. In this sense $\FF$ induces a quantization.
\begin{theo}\label{theo:HAdef}
Given a Hopf algebra $H$ and a twist $\FF\in H\otimes H$ there is a new Hopf algebra $H^\FF$ given by
\begin{flalign}
\big(H,\mu,\Delta^\FF,\epsilon,S^\FF\big)~.
\end{flalign}
As algebras $H^\FF=H$ and they also have the same counit $\epsilon^\FF=\epsilon$. The deformed coproduct is given by,
for all $\xi\in H$,
\begin{flalign}
\Delta^\FF(\xi) = \FF\,\Delta(\xi)\,\FF^{-1}~. 
\end{flalign}
The deformed antipode reads, for all $\xi\in H$,
\begin{flalign}
S^\FF(\xi) = \chi\,S(\xi)\,\chi^{-1}~,
\end{flalign}
where $\chi := f^\alpha \,S(f_\alpha)$ and $\chi^{-1}=S(\bar f^\alpha)\,\bar f_\alpha$.
\end{theo}
A proof of this theorem can be found in textbooks on Hopf algebras, e.g.~\cite{Majid:1996kd}.
\begin{rem}
The Hopf algebra $H^\FF$ admits the twist $\FF^{-1}$. The deformed Hopf algebra
$(H^\FF)^{\FF^{-1}}$ is equal to the original one $H$. Because of this we call the deformation
$\FF^{-1}$ of the Hopf algebra $H^\FF$ dequantization. 
\end{rem}
\begin{theo}\label{theo:algmoddef}
Given a Hopf algebra $H$, a twist $\FF\in H\otimes H$ and a left $H$-module algebra $A\in\AAAlg{H}{}{}$,
then there is a deformed left $H^\FF$-module algebra $A_\star\in\AAAlg{H^\FF}{}{}$. 
As a left $H$-module (and thus also as a left $H^\FF$-module, since $H$ and $H^\FF$ are equal as algebras) $A_\star$ is equal to $A$. The product 
in $A_\star$ is given by, for all $a,b\in A$,
\begin{flalign}\label{eqn:starprod}
a\star b := \mu_\star(a\otimes b) = \mu\circ\FF^{-1}\ra (a\otimes b)= (\bar f^\alpha\ra a)\,(\bar f_\alpha\ra b)~.
\end{flalign}

Given  also a left $H$-module $A$-bimodule $V\in\MMMod{H}{A}{A}$,
then there is a deformed left $H^\FF$-module $A_\star$-bimodule $V_\star\in\MMMod{H^\FF}{A_\star}{A_\star}$.
As a left $H$-module (and thus also as a left $H^\FF$-module) $V_\star$ is equal to $V$. The left and right
 $A_\star$-actions are given by,
for all $a\in A$ and $v\in V$,
\begin{subequations}\label{eqn:starmodprod}
\begin{flalign}
\label{eqn:starmodprod1}a\star v = \cdot \circ\FF^{-1}\ra (a\otimes v) = (\bar f^\alpha\ra a)\cdot (\bar f_\alpha\ra v)~,\\
\label{eqn:starmodprod2}v\star a = \cdot \circ \FF^{-1}\ra (v\otimes a) = ( \bar f^\alpha\ra v)\cdot (\bar f_\alpha\ra a)~.
\end{flalign}
\end{subequations}

Given also a left $H$-module $A$-bimodule algebra $E\in\AAAlg{H}{A}{A}$, then there is a deformed
left $H^\FF$-module $A_\star$-bimodule algebra $E_\star\in\AAAlg{H^\FF}{A_\star}{A_\star}$. As a
left $H$-module (and thus also as a left $H^\FF$-module) $E_\star$ is equal to $E$, the product is given by (\ref{eqn:starprod}) and 
the $A_\star$-actions by (\ref{eqn:starmodprod}).
\end{theo}
For a proof of this theorem we refer to \cite{GIAQUINTO,ASCHIERISCHENKEL}. See also \cite{SCHENKELDISS} 
for an early account of the results in collaboration \cite{ASCHIERISCHENKEL}.
  
Analogously to Theorem \ref{theo:algmoddef} we can deform $\MMMod{H}{A}{}$ and $\MMMod{H}{}{A}$
modules into $\MMMod{H^\FF}{A_\star}{}$ and $\MMMod{H^\FF}{}{A_\star}$ modules by restricting
(\ref{eqn:starmodprod}) to (\ref{eqn:starmodprod1}) and (\ref{eqn:starmodprod2}), respectively.
\begin{ex}
Deforming via the theorems above the Hopf algebra $U\Xi$ and its modules $C^\infty(M)$, $\Xi$ and $\Omega$ (see Example 
\ref{ex:classicalgeometry}) we obtain the deformed diffeomorphisms $U\Xi^\FF$ and its deformed modules $C^\infty(M)_\star$,
$\Xi_\star$ and $\Omega_\star$. These structures are the building blocks for noncommutative gravity
 \cite{Aschieri:2005yw,Aschieri:2005zs}.
\end{ex}

\section{\label{sec:hom}Quantization of module homomorphisms}
\subsection{Endomorphisms}
We  investigate the quantization of the endomorphisms of a module $V$.
For this let us consider first the case of a left $H$-module $V\in\MMMod{H}{}{}$ and later introduce 
an additional $A$-bimodule structure on $V$. 
The set of all $\bbK$-linear maps from $V$ to itself is denoted by $\End_\bbK(V)$. We can compose 
two endomorphisms $P,Q\in\End_\bbK(V)$ with the usual composition product $\circ$, 
i.e.~for all $v\in V$, $P\circ Q(v):= P(Q(v))$. This turns $\End_\bbK(V)$ into an algebra.
Furthermore, we have an induced left $H$-action on $\End_\bbK(V)$ given by the adjoint action, for all
$\xi\in H$ and $P\in \End_\bbK(V)$,
\begin{flalign}\label{eqn:adjointaction}
\xi\RA P := \xi_1\ra\circ P\circ S(\xi_2)\ra~.
\end{flalign} 
In this expression we have denoted, for all $\xi\in H$, by $\xi\ra\in \End_\bbK(V)$ the endomorphism $v\mapsto \xi\ra v$.
The product $\circ$ and the adjoint action $\RA$ are compatible, for all $\xi\in H$ and $P,Q\in\End_\bbK(V)$,
\begin{flalign}
\nn \xi\RA (P\circ Q) &= \xi_1\ra\circ P\circ Q\circ S(\xi_2)\ra = \xi_1\ra\circ P\circ S(\xi_2)\ra\circ\xi_3\ra\circ Q\circ S(\xi_4)\ra~\\
&=(\xi_1\RA P)\circ (\xi_2\RA Q)~,
\end{flalign}
where we have used the Hopf algebra properties (\ref{eqn:HAprop}). Thus,
$\End_\bbK(V)\in \AAAlg{H}{}{}$ is a left $H$-module algebra.

If additionally $V\in \MMMod{H}{}{A}$ is a left $H$-module right $A$-module we can consider the subset 
of  right $A$-linear endomorphisms $\End_A(V)\subseteq \End_\bbK(V)$, i.e.~$P\in\End_A(V)$ satisfies,
for all $a\in A$ and $v\in V$, $P(v\cdot a) = P(v)\cdot a$. These endomorphisms form a left $H$-module
subalgebra of $\End_\bbK(V)$, i.e.~$\End_A(V)\in\AAAlg{H}{}{}$, with the usual composition product $\circ$ and
the adjoint action (\ref{eqn:adjointaction}).

Finally, if $V\in\MMMod{H}{A}{A}$ is a left $H$-module $A$-bimodule, then
the left $A$-action on $V$ induces an $A$-bimodule structure on
 $\End_A(V)$ and also on $\End_\bbK(V)$, for all $a\in A$ and $P\in \End_\bbK(V)$,
\begin{flalign}\label{eqn:homAaction}
a\cdot P := l_a\circ P~,\quad P\cdot a:= P\circ l_a~,
\end{flalign}
where, for all $a\in A$, $l_a:V\to V\,,~v\mapsto a\cdot v$ is the left multiplication map.
Thus, $\End_\bbK(V),\End_A(V)\in\AAAlg{H}{A}{A}$ are left $H$-module $A$-bimodule algebras (the covariance
of the $A$-bimodule structure under $H$ is an immediate consequence of the left $H$-module algebra homomorphism 
$A\to \End_\bbK(V)\,,~a\mapsto l_a$). 

It is also possible to consider left $A$-linear endomorphisms $_A\End(V)$ of the module $V\in\MMMod{H}{A}{A}$
and their deformation. The corresponding results follow from the discussion of right $A$-linear endomorphisms
and a mirror construction, see \cite{ASCHIERISCHENKEL,SCHENKELDISS}. Since this is rather technical we restrict ourselves
in this proceedings article to the right linear case $\End_A(V)$.

Let $H$ be a Hopf algebra with twist $\FF\in H\otimes H$. We consider from now on only the case of
$V\in\MMMod{H}{A}{A}$, since the results for modules with only a left or right $A$-module structure
 follow by simply forgetting the unwanted structures.
 We are interested in the deformation of the endomorphism algebras $\End_\bbK(V), \End_A(V)\in\AAAlg{H}{A}{A}$.
 There are two options: Firstly, we can apply Theorem \ref{theo:algmoddef} to the left 
 $H$-module $A$-bimodule algebras $\End_\bbK(V), \End_A(V)\in\AAAlg{H}{A}{A}$.
 The results are the left $H^\FF$-module $A_\star$-bimodule algebras 
 $\End_\bbK(V)_\star, \End_A(V)_\star\in\AAAlg{H^\FF}{A_\star}{A_\star}$. There, the Hopf algebra 
 action is not deformed, the product is given by the $\star$-composition
 \begin{flalign}
 P\circ_\star Q = (\bar f^\alpha\RA P)\circ (\bar f_\alpha\RA Q)~,
 \end{flalign}
 and similarly the $A_\star$-bimodule structure is given by
 \begin{flalign}
 a\star P = (\bar f^\alpha\ra a)\cdot (\bar f_\alpha\RA P)~,\quad P\star a = (\bar f^\alpha\RA P)\cdot (\bar f_\alpha\ra a)~.
 \end{flalign}
The elements of $\End_A(V)_\star$ are right $A$-linear and in general {\it not} right $A_\star$-linear.
The second option is to consider the endomorphisms of the deformed module $V_\star \in\MMMod{H^\FF}{A_\star}{A_\star}$,
i.e.~$\End_\bbK(V_\star),\End_{A_\star}(V_\star)\in\AAAlg{H^\FF}{A_\star}{A_\star}$.
Here the Hopf algebra action is the $H^\FF$-adjoint action, for all $\xi\in H^\FF$ and $P\in \End_\bbK(V_\star)$,
\begin{flalign}
\xi\RA_\FF P := \xi_{1_\FF}\ra\circ P\circ S^\FF(\xi_{2_\FF})\ra~,
\end{flalign}
with the deformed coproduct $\Delta^\FF(\xi)=\xi_{1_\FF}\otimes \xi_{2_\FF}$ and the deformed antipode $S^\FF$ (see Theorem
\ref{theo:HAdef}). The product is the usual composition $\circ$ and the $A_\star$-bimodule structure is 
induced by the left $H^\FF$-module algebra homomorphism
$A_\star\to \End_\bbK(V_\star)\,,~a\mapsto l^\star_a$ given by, for all $v\in V_\star$, $l^\star_a(v)=a\star v$,
i.e.~for all $P\in \End_\bbK(V_\star)$ and $a\in A_\star$,
\begin{flalign}
a\ast P = l_a^\star\circ P~,\quad P\ast a = P\circ l^\star_a~.
\end{flalign}
The following theorem, which is proven in  \cite{ASCHIERISCHENKEL,SCHENKELDISS}, states 
that there is an isomorphism between both options:
\begin{theo}\label{theo:quantendo}
The map 
\begin{flalign}
D_\FF: \End_\bbK(V)_\star \to \End_\bbK(V_\star)~,~~P\mapsto D_\FF(P) = ( \bar f^\alpha\RA P)\circ \bar f_\alpha\ra~
\end{flalign}
is a  left $H^\FF$-module $A_\star$-bimodule algebra isomorphism.
It restricts to a left $H^\FF$-module $A_\star$-bimodule algebra isomorphism
\begin{flalign}
D_\FF:\End_A(V)_\star \to \End_{A_\star}(V_\star)~.
\end{flalign}
\end{theo}
Notice that the elements of $\End_A(V)_\star$ are ``classical'' endomorphisms, i.e.~right linear with respect to the
undeformed algebra $A$, while the elements of $\End_{A_\star}(V_\star)$ are right linear with respect to the deformed
algebra $A_\star$. Thus, we also have found a quantization map providing a bijective correspondence
 between classical and deformed right linear maps.

A similar map $D_\FF$ also exists for a variety of other examples, see \cite{ASCHIERISCHENKEL,SCHENKELDISS}.


\subsection{Homomorphisms}
The results of the last subsection naturally generalize to module homomorphisms.
To start, let us clarify the relevant algebraic structures. 
Consider two left $H$-modules $V,W\in\MMMod{H}{}{}$. The space $\Hom_\bbK(V,W)\in\MMMod{H}{}{}$ of
$\bbK$-linear maps from $V$ to $W$ is also left $H$-module by employing the adjoint action $\RA$
defined in (\ref{eqn:adjointaction}). In contrast to the endomorphisms we do not have an algebra structure
on homomorphisms. However, if $V,W\in\MMMod{H}{A}{A}$ are left $H$-module $A$-bimodules
with $A\in\AAAlg{H}{}{}$ then $\Hom_\bbK(V,W),\Hom_A(V,W)\in \MMMod{H}{A}{A}$ are left $H$-module
$A$-bimodules with $A$-actions given in (\ref{eqn:homAaction}).
For the left $A$-linear homomorphisms $_A\Hom(V,W)$ and their deformation we again refer to
\cite{ASCHIERISCHENKEL,SCHENKELDISS}.

Let now $H$ be a Hopf algebra with twist $\FF\in H\otimes H$. 
Similar to the endomorphisms we have two options to deform the homomorphisms:
Firstly, we can apply Theorem \ref{theo:algmoddef} and obtain the left 
$H^\FF$-module $A_\star$-bimodules $\Hom_\bbK(V,W)_\star,\Hom_A(V,W)_\star \in\MMMod{H^\FF}{A_\star}{A_\star}$.
Secondly, we have the homomorphisms between the deformed modules
$\Hom_\bbK(V_\star,W_\star),\Hom_{A_\star}(V_\star,W_\star)\in \MMMod{H^\FF}{A_\star}{A_\star}$.
Analogously to Theorem \ref{theo:quantendo} these modules are related by an isomorphism:
\begin{theo} \label{theo:quanthomo}
The map
\begin{flalign}
D_\FF : \Hom_\bbK(V,W)_\star \to \Hom_\bbK(V_\star,W_\star)~,\quad P\mapsto D_\FF(P) = (\bar f^\alpha\RA P)\circ\bar f_\alpha\ra
\end{flalign}
is a left $H^\FF$-module $A_\star$-bimodule isomorphism.
It restricts to a left $H^\FF$-module $A_\star$-bimodule isomorphism
\begin{flalign}
D_\FF : \Hom_A(V,W)_\star \to \Hom_{A_\star}(V_\star,W_\star)~.
\end{flalign}
\end{theo}
\begin{ex}
Let $V\in\MMMod{H}{A}{A}$. The dual module is given by 
$V^\prime := \Hom_A(V,A)\in\MMMod{H}{A}{A}$. The quantization map of Theorem \ref{theo:quanthomo}
leads to an isomorphism between the modules $(V^\prime)_\star= \Hom_A(V,A)_\star\in\MMMod{H^\FF}{A_\star}{A_\star}$
and $(V_\star)^\prime = \Hom_{A_\star}(V_\star,A_\star)\in\MMMod{H^\FF}{A_\star}{A_\star}$.
Thus, quantizing the dual module is (up to isomorphism) the same as dualizing the quantized module.
\end{ex}


\section{\label{sec:con}Quantization of connections}
We extend the results on the quantization isomorphism $D_\FF$ obtained in Section \ref{sec:hom} for endomorphisms and
homomorphisms to connections on modules. For this we require some notations from the theory of connections, see
e.g.~the lecture notes \cite{DuViLecture} for an introduction.


\subsection{Connections on modules}
\begin{defi}\label{defi:diffcalc}
Let $A$ be an associative and unital algebra. A {\bf differential calculus} over
$A$ (or a {\bf differential graded algebra}) is an $\bbN^0$-graded algebra 
$(\Omega^\bullet = \bigoplus_{n\geq 0} \Omega^n,\wedge)$,
 where $\Omega^0=A$, together with a $\bbK$-linear map $\dd:\Omega^\bullet\to\Omega^\bullet$
of degree one, satisfying $\dd\circ\dd=0$ and
\begin{flalign}
\dd(\omega\wedge\omega^\prime) = (\dd\omega)\wedge \omega^\prime + (-1)^{\deg(\omega)}\,\omega\wedge(\dd\omega^\prime)~,
\end{flalign}
for all $\omega,\omega^\prime\in\Omega^\bullet$ with $\omega$ of homogeneous degree.
\end{defi}
\sk
In the hypotheses above, the spaces $\Omega^n\in\MMMod{}{A}{A}$ are $A$-bimodules, for all $n\in \bbN^0$. 
We denote $\Omega^1$ simply by $\Omega$.
In analogy to classical
differential geometry, we call $\Omega^n$ the module of $n$-forms and
denote the product by a wedge $\wedge$. One has to be a bit careful with this notation, since in contrast to the
differential geometric wedge product, our product is {\it not} necessarily graded commutative.
\begin{ex}\label{ex:dga}
The exterior algebra of differential forms on a $d$-dimensional smooth manifold $M$ (together with the exterior differential)
is a differential calculus according to Definition \ref{defi:diffcalc}. There $A=C^\infty(M)$ and $\Omega^n$ 
is the module of smooth $n$-forms. We have $\Omega^n=0$ for $n>d$ and (as a special case) the 
wedge is graded commutative, 
for all $\omega,\omega^\prime\in\Omega^\bullet$ of homogeneous degree, $\omega\wedge\omega^\prime
= (-1)^{\deg(\omega)\,\deg(\omega^\prime)}\,\omega^\prime\wedge\omega$.
\end{ex}
\begin{defi}
Let $A$ be an associative and unital algebra and $(\Omega^\bullet,\wedge,\dd)$ be a differential calculus over $A$.
Let further $V\in\MMMod{}{}{A}$ be a right $A$-module.
A {\bf connection} on $V$ is a $\bbK$-linear map $\nab:V\to V\otimes_A\Omega$
satisfying the Leibniz rule, for all $v\in V$ and $a\in A$,
\begin{flalign}\label{eqn:leibnizcon}
\nab(v\cdot a) = \nab(v)\cdot a + v\otimes_A\dd a~.
\end{flalign}
We denote the set of all connections on $V$ by $\Con_A(V)$.
\end{defi}
\sk
Notice that $\Con_A(V)$ is an affine space over $\Hom_A(V,V\otimes_A\Omega)$: Let
$\nab\in\Con_A(V)$ and $P\in\Hom_A(V,V\otimes_A\Omega)$, then for all $v\in V$ and $a\in A$,
\begin{flalign}
(\nab+ P)(v\cdot a)  = \nab(v)\cdot a +v\otimes_A\dd a + P(v)\cdot a = (\nab+P)(v)\cdot a + v\otimes_A \dd a~. 
\end{flalign}
The action of $\Hom_A(V,V\otimes_A\Omega)$ on $\Con_A(V)$ is clearly free and transitive.

In case $V\in\MMMod{}{A}{A}$ is an $A$-bimodule a {\bf right connection} on $V$ is a $\bbK$-linear map 
$\nab:V\to V\otimes_A\Omega$ satisfying (\ref{eqn:leibnizcon}).
The space of right connections on $V$ is also denoted by $\Con_A(V)$. 
Notice that we do {\it not} demand any compatibility conditions for a right connection with the
left $A$-module structure on $V$. 
This is different to the concept of bimodule connections proposed in \cite{Mourad:1994xa,DuViMasson,DuViLecture}.
We will comment more on this later in Section \ref{sec:conlift} and argue that 
these conditions are too restrictive in our setting.

For connections on left $A$-modules or left connections on $A$-bimodules and their quantization we refer to
\cite{ASCHIERISCHENKEL,SCHENKELDISS}.
 
\subsection{Quantization isomorphism}
Let $H$ be a Hopf algebra and $A\in\AAAlg{H}{}{}$. 
Let further $(\Omega^\bullet,\wedge,\dd)$ be a {\bf left $H$-covariant differential calculus} over $A$, 
i.e.~$(\Omega^\bullet,\wedge)\in\AAAlg{H}{}{}$ is a left $H$-module algebra, the action $\ra$ is of degree zero
and the differential is equivariant, for all $\xi\in H$ and $\omega\in\Omega^\bullet$,
\begin{flalign}
\xi\ra(\dd \omega) = \dd(\xi\ra \omega)~,
\end{flalign}
or equivalently $\xi\RA\dd =\epsilon(\xi)\,\dd$. As a consequence, the modules of $n$-forms 
have the following structure $\Omega^n\in\MMMod{H}{A}{A}$, for all $n\in\bbN$.

Given a twist, a left $H$-covariant differential calculus is deformed into a left $H^\FF$-covariant differential calculus:
\begin{lem}\label{lem:dgadef}
Let $H$ be a Hopf algebra with twist $\FF\in H\otimes H$, $A\in\AAAlg{H}{}{}$ and $(\Omega^\bullet,\wedge,\dd)$
be a left $H$-covariant differential calculus over $A$. Then $(\Omega^\bullet,\wedge_\star,\dd)$ is a left $H^\FF$-covariant
differential calculus over $A_\star$, where $\wedge_\star$ is the deformed product obtained in Theorem \ref{theo:algmoddef}.
\end{lem}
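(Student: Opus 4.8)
The plan is to verify one by one the defining properties of a left $H^\FF$-covariant differential calculus over $A_\star$ for the triple $(\Omega^\bullet,\wedge_\star,\dd)$, observing that almost all of them are inherited from the undeformed calculus together with Theorem \ref{theo:algmoddef}, the single genuinely new point being the graded Leibniz rule for $\wedge_\star$.

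First I would invoke Theorem \ref{theo:algmoddef} applied to the left $H$-module algebra $(\Omega^\bullet,\wedge)\in\AAAlg{H}{}{}$: this immediately gives that $(\Omega^\bullet,\wedge_\star)\in\AAAlg{H^\FF}{}{}$ is an associative unital left $H^\FF$-module algebra, with unchanged (degree-zero) action $\ra$. It remains to check that $\wedge_\star$ respects the $\bbN^0$-grading and that the degree-zero part reproduces $A_\star$. Both are easy: since $\omega\wedge_\star\omega' = (\bar f^\alpha\ra\omega)\wedge(\bar f_\alpha\ra\omega')$ and the action $\ra$ is of degree zero, the factors $\bar f^\alpha\ra\omega$ and $\bar f_\alpha\ra\omega'$ carry the same degrees as $\omega,\omega'$, so $\wedge_\star$ sends $\Omega^m\times\Omega^n$ into $\Omega^{m+n}$ as required. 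Restricting to $\Omega^0=A$, the formula becomes precisely the star product $a\wedge_\star b = (\bar f^\alpha\ra a)(\bar f_\alpha\ra b)=a\star b$ of Theorem \ref{theo:algmoddef}, so $(\Omega^0,\wedge_\star)=A_\star$. The differential $\dd$ is literally the same $\bbK$-linear degree-one map, so $\dd\circ\dd=0$ is unchanged.

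The heart of the argument is the graded Leibniz rule for $\wedge_\star$. I would compute, for $\omega$ of homogeneous degree,
\begin{align}
\dd(\omega\wedge_\star\omega') &= \dd\big((\bar f^\alpha\ra\omega)\wedge(\bar f_\alpha\ra\omega')\big)\nonumber\\
&= \dd(\bar f^\alpha\ra\omega)\wedge(\bar f_\alpha\ra\omega') + (-1)^{\deg(\omega)}\,(\bar f^\alpha\ra\omega)\wedge\dd(\bar f_\alpha\ra\omega')~,
\end{align}
using the undeformed Leibniz rule and the fact that $\deg(\bar f^\alpha\ra\omega)=\deg(\omega)$ since $\ra$ is of degree zero. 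Now the two ingredients of left $H$-covariance of the original calculus come into play: equivariance of $\dd$ lets me rewrite $\dd(\bar f^\alpha\ra\omega)=\bar f^\alpha\ra(\dd\omega)$ and $\dd(\bar f_\alpha\ra\omega')=\bar f_\alpha\ra(\dd\omega')$, after which both terms are recognized as star products, yielding exactly $\dd(\omega\wedge_\star\omega') = (\dd\omega)\wedge_\star\omega' + (-1)^{\deg(\omega)}\,\omega\wedge_\star(\dd\omega')$.

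Finally, left $H^\FF$-covariance of $(\Omega^\bullet,\wedge_\star,\dd)$ is immediate: $H^\FF=H$ as algebras and the action $\ra$ is unchanged and still of degree zero, so the equivariance condition $\xi\ra\dd\omega=\dd(\xi\ra\omega)$ for $\xi\in H^\FF$ is identical to the hypothesis on the undeformed calculus. The only step requiring any care is the graded Leibniz computation above, and the subtlety there lies entirely in seeing that the degree-zero property of $\ra$ is what keeps the sign $(-1)^{\deg(\omega)}$ correct, while equivariance of $\dd$ is what allows the twist legs to be pulled through the differential; I expect this interplay to be the main, if mild, obstacle.
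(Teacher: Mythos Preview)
Your argument is correct and complete: invoking Theorem \ref{theo:algmoddef} for the algebra structure, checking that the degree-zero action preserves the grading and hence the sign in the Leibniz rule, and then using $H$-equivariance of $\dd$ to commute the twist legs through the differential is exactly the natural route, and all steps go through as you wrote them.

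The paper itself does not supply a proof of this lemma; it is stated as a fact, with the detailed verification deferred to the references \cite{ASCHIERISCHENKEL,SCHENKELDISS}. Your proof is precisely the standard one that appears there, so there is nothing to compare beyond noting that you have reconstructed the intended argument.
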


Consider also a left $H$-module right $A$-module $V\in\MMMod{H}{}{A}$. If $V\in\MMMod{H}{A}{A}$ we simply
 forget about the left $A$-module structure.
Since $\Con_A(V)\subseteq \Hom_\bbK(V,V\otimes_A\Omega)$ we can apply the quantization
isomorphism $D_\FF$ of Theorem \ref{theo:quanthomo} to any connection $\nab\in\Con_A(V)$ in order to obtain a map
$D_\FF(\nab)\in\Hom_\bbK(V_\star,(V\otimes_A\Omega)_\star)$. This can not yet be a deformed connection, 
i.e.~a $\bbK$-linear map $V_\star\to V_\star\otimes_{A_\star}\Omega_\star$ satisfying the Leibniz rule (\ref{eqn:leibnizcon})
with respect to $\star$-multiplication, since the codomain of $D_\FF(\nab)$ is not $V_\star\otimes_{A_\star}\Omega_\star$
but $(V\otimes_A \Omega)_\star$. 
\begin{lem}\label{lem:varphi}
Let $H$ be a Hopf algebra with twist $\FF\in H\otimes H$ and let $A\in\AAAlg{H}{}{}$, $V\in\MMMod{H}{}{A}$
and $W\in\MMMod{H}{A}{A}$. Then the $\bbK$-linear map $\varphi:V_\star\otimes_{A_\star}W_\star\to(V\otimes_A W)_\star$
defined by, for all $v\in V_\star$ and $w\in W_\star$, $\varphi(v\otimes_{A_\star}w) := (\bar f^\alpha\ra v)\otimes_A(\bar f_\alpha\ra w)$
is a well-defined left $H^\FF$-module right $A_\star$-module isomorphism.
\end{lem}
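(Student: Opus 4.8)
The plan is to construct an explicit two-sided inverse for $\varphi$ and to verify the three compatibility properties---well-definedness over $A_\star$, right $A_\star$-linearity and left $H^\FF$-equivariance---directly from the twist axioms. First I would record the structure on the target: $V\otimes_A W$ carries the diagonal left $H$-action $\xi\ra(v\otimes_A w)=(\xi_1\ra v)\otimes_A(\xi_2\ra w)$ and the right $A$-action through $W$, so it lies in $\MMMod{H}{}{A}$ and Theorem \ref{theo:algmoddef} produces $(V\otimes_A W)_\star\in\MMMod{H^\FF}{}{A_\star}$. I would then view $\varphi$ as induced by the $\bbK$-bilinear map $V_\star\times W_\star\to(V\otimes_A W)_\star$, $(v,w)\mapsto(\bar f^\alpha\ra v)\otimes_A(\bar f_\alpha\ra w)$, so that everything reduces to checking that this map is $A_\star$-balanced.

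For well-definedness I would compute the images of $(v\star a,w)$ and $(v,a\star w)$ and show they agree. Expanding $v\star a=(\bar f^\beta\ra v)\cdot(\bar f_\beta\ra a)$, using that the right $A$-action on $V$ is $H$-covariant and the $A$-balancing $(x\cdot a)\otimes_A w=x\otimes_A(a\cdot w)$ in $V\otimes_A W$, the first image becomes
\[((\bar f^\alpha_1\bar f^\beta)\ra v)\otimes_A\big(((\bar f^\alpha_2\bar f_\beta)\ra a)\cdot(\bar f_\alpha\ra w)\big).\]
Expanding instead $a\star w=(\bar f^\beta\ra a)\cdot(\bar f_\beta\ra w)$ and using $H$-covariance of the left $A$-action on $W$, the second image becomes
\[(\bar f^\alpha\ra v)\otimes_A\big(((\bar f_{\alpha_1}\bar f^\beta)\ra a)\cdot((\bar f_{\alpha_2}\bar f_\beta)\ra w)\big).\]
These coincide because the legs acting on $v$, $a$, $w$ are exactly the two sides of the inverse $2$-cocycle identity
\[\bar f^\alpha_1\bar f^\beta\otimes\bar f^\alpha_2\bar f_\beta\otimes\bar f_\alpha=\bar f^\alpha\otimes\bar f_{\alpha_1}\bar f^\beta\otimes\bar f_{\alpha_2}\bar f_\beta.\]
I expect this identification---recognising the correct cocycle identity and matching its three legs to the slots $v,a,w$---to be the main obstacle; once it is set up, the equality is forced.

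For left $H^\FF$-linearity I would use that the action on $V_\star\otimes_{A_\star}W_\star$ is built from $\Delta^\FF(\xi)=\xi_{1_\FF}\otimes\xi_{2_\FF}$ whereas the action on $(V\otimes_A W)_\star$ uses the undeformed $\Delta$; comparing $\varphi(\xi\ra(v\otimes_{A_\star}w))$ with $\xi\ra\varphi(v\otimes_{A_\star}w)$ then reduces to $\FF^{-1}\Delta^\FF(\xi)=\Delta(\xi)\FF^{-1}$, which is immediate from $\Delta^\FF(\xi)=\FF\Delta(\xi)\FF^{-1}$ (Theorem \ref{theo:HAdef}). For right $A_\star$-linearity I would compute $\varphi(v\otimes_{A_\star}(w\star a))$ and $\varphi(v\otimes_{A_\star}w)\star a$ by the same expansion of the deformed actions; the two agree once more by the inverse $2$-cocycle identity above, now with its legs matched to $v,w,a$.

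Finally, for bijectivity I would exhibit the candidate inverse $\psi:(V\otimes_A W)_\star\to V_\star\otimes_{A_\star}W_\star$, $v\otimes_A w\mapsto(f^\alpha\ra v)\otimes_{A_\star}(f_\alpha\ra w)$, built from $\FF$ rather than $\FF^{-1}$; its well-definedness over $A$ follows from the mirror of the balancing argument above (equivalently, it is the map of this lemma applied to the dequantizing twist $\FF^{-1}$ of $H^\FF$). Using the module axiom $(\xi\zeta)\ra x=\xi\ra(\zeta\ra x)$ together with $\FF\FF^{-1}=\FF^{-1}\FF=1\otimes1$, both composites $\varphi\circ\psi$ and $\psi\circ\varphi$ collapse to the identity, so $\varphi$ is a bijection with inverse $\psi$, completing the proof.
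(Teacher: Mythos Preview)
Your argument is correct and complete. The paper itself does not include a proof of this lemma in the proceedings text; it only states the result and defers proofs to the references \cite{ASCHIERISCHENKEL,SCHENKELDISS}. Your approach---checking $A_\star$-balancedness, $H^\FF$-equivariance and right $A_\star$-linearity via the inverse $2$-cocycle identity and the relation $\FF^{-1}\Delta^\FF(\xi)=\Delta(\xi)\FF^{-1}$, then exhibiting the explicit inverse $\psi$ built from $\FF$---is exactly the standard verification one would expect, and each step you outline goes through as stated. The observation that $\psi$ is simply the instance of this same lemma for the dequantizing twist $\FF^{-1}$ of $H^\FF$ is a clean way to avoid repeating the balancing computation.
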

 The strategy to obtain the quantization map $\widetilde{D}_\FF$ for connections is 
 to compose $D_\FF$ with $\varphi^{-1}$: 
 \begin{flalign}
 \xymatrix{
 V_\star \ar[rr]^-{\widetilde{D}_\FF(\nab)}\ar[rrd]_-{D_\FF(\nab)} & & V_\star\otimes_{A_\star} \Omega_\star\\
 && (V\otimes_A \Omega)_\star \ar[u]_-{\varphi^{-1}}
 }
 \end{flalign}
 To be precise, we define the left $H^\FF$-module isomorphism 
\begin{flalign}\label{eqn:Dtilde}
\widetilde{D}_\FF:\Hom_\bbK(V,V\otimes_A\Omega)_\star \to \Hom_\bbK(V_\star,V_\star\otimes_{A_\star}\Omega_\star)~,\quad P\mapsto \widetilde{D}_\FF(P) = \varphi^{-1}\circ D_\FF(P)~.
\end{flalign}
This map induces the quantization isomorphism for connections:
\begin{theo}\label{theo:conquant}
The left $H^\FF$-module isomorphism (\ref{eqn:Dtilde}) restricts to the left $H^\FF$-module isomorphism
\begin{flalign}
\widetilde{D}_\FF:\Hom_A(V,V\otimes_A\Omega)_\star \to \Hom_{A_\star}(V_\star,V_\star\otimes_{A_\star}\Omega_\star) 
\end{flalign}
and to the affine space isomorphism
\begin{flalign}
\widetilde{D}_\FF:\Con_A(V)\to \Con_{A_\star}(V_\star)~,
\end{flalign}
where $\Con_A(V)$ and $\Con_{A_\star}(V_\star)$ are respectively affine spaces over the isomorphic $H^\FF$-modules 
$\Hom_A(V,V\otimes_A\Omega)_\star$ and $\Hom_{A_\star}(V_\star,V_\star\otimes_{A_\star}\Omega_\star)$.
\end{theo}
The proof of this theorem can be found in \cite{ASCHIERISCHENKEL,SCHENKELDISS}.

\section{\label{sec:homlift}Tensor product of module homomorphisms}
Given two $\bbK$-modules $V$ and $W$ we can consider the tensor product $V\otimes W$ (over $\bbK$).
By construction, $V\otimes W$ is again a $\bbK$-module and we denote the
image of $(v,w)\in V\times W$ under the canonical $\bbK$-bilinear map 
$V\times W\to V\otimes W$ by $v\otimes w$.
Given two further $\bbK$-modules $\widetilde{V}$, $\widetilde{W}$ and also 
two $\bbK$-linear maps $P:V\to \widetilde{V}$ and $Q:W\to\widetilde{W}$ there is the tensor product map
$P\otimes Q:V\otimes W\to \widetilde{V}\otimes\widetilde{W}$ defined by, for all $v\in V$ and $w\in W$,
\begin{flalign}\label{eqn:naivetensor}
P\otimes Q(v\otimes w) :=P(v)\otimes Q(w)
\end{flalign}
 and extended to all $V\otimes W$ by $\bbK$-linearity.
This tensor product of $\bbK$-linear maps is associative, i.e.~for $Z$, $\widetilde{Z}$ being $\bbK$-modules and
$T:Z\to\widetilde{Z}$ being $\bbK$-linear, we have on $V\otimes W\otimes Z$
\begin{flalign}
P\otimes (Q\otimes T) =(P\otimes Q)\otimes T~.
\end{flalign}
It also satisfies the composition law, for $\widehat{V}$, $\widehat{W}$ being $\bbK$-modules 
and $\widetilde{P}:\widetilde{V}\to\widehat{V}$, $\widetilde{Q}:\widetilde{W}\to\widehat{W}$ being $\bbK$-linear,
\begin{flalign}
(\widetilde{P}\otimes\widetilde{Q})\circ (P\otimes Q) = (\widetilde{P}\circ P)\otimes (\widetilde{Q}\circ Q)~.
\end{flalign}

Let $H$ be a Hopf algebra and let us consider left $H$-modules $V,W\in\MMMod{H}{}{}$. Then we
 can define a left $H$-module structure on $V\otimes W$ by, for all $\xi\in H$, $v\in V$ and $w\in W$,
 \begin{flalign}\label{eqn:Hactiontensor}
 \xi\ra(v\otimes w) := (\xi_1\ra v)\otimes (\xi_2\ra w)~.
 \end{flalign}
The first question that now arises is the following: Is the tensor product $\otimes$ of $\bbK$-linear maps
compatible with this $H$-action? To answer this question we calculate the adjoint action of $\xi\in H$ on $P\otimes Q$ and
find
\begin{flalign}\label{eqn:Hactionnaivetensor}
\xi\RA(P\otimes Q) = (\xi_1\ra\circ P \circ S(\xi_3)\ra\, )\otimes (\xi_2\RA Q)~.
\end{flalign}
In case of a noncocommutative Hopf algebra, i.e.~$\xi_1\otimes \xi_2 \neq \xi_2\otimes \xi_1$,
and a nonequivariant map $Q$, i.e.~$\xi\RA Q \neq \epsilon(\xi)\,Q$, the right hand side of
(\ref{eqn:Hactionnaivetensor}) is not equal to $(\xi_1\RA P)\otimes (\xi_2\RA Q)$, meaning that 
the tensor product of $\bbK$-linear maps is not compatible with the $H$-action.
This can be understood as follows: The ordering on the left hand side of (\ref{eqn:naivetensor}) is $P,Q,v,w$, while
on the right hand side we have $P,v,Q,w$, thus $Q$ and $v$ do not appear properly ordered.
 This issue can be resolved if the Hopf algebra comes with a quasitriangular structure, which allows us to
 define braiding maps interchanging in a controlled way the ordering of $Q$ and $v$.

\subsection{Quasitriangular Hopf algebras}
\begin{defi}
A {\bf quasi-cocommutative Hopf algebra} $(H,\RR)$ is a Hopf algebra $H$ together with an invertible
element $\RR\in H\otimes H$ (called universal $R$-matrix) such that, for all $\xi\in H$,
\begin{flalign}\label{eqn:R1}
\Delta^\cop(\xi) = \RR\,\Delta(\xi)\,\RR^{-1}~,
\end{flalign} 
where $\Delta^\cop(\xi)=\xi_2\otimes\xi_1$ is the coopposite coproduct.

A quasi-cocommutative Hopf algebra is called {\bf quasitriangular} if
\begin{flalign}\label{eqn:R2}
(\Delta\otimes \id)\RR = \RR_{13}\,\RR_{23}~,\quad (\id\otimes\Delta)\RR=\RR_{13}\,\RR_{12}~,
\end{flalign}
and {\bf triangular} if additionally
$
\RR_{21} = \RR^{-1}
$, where $\RR_{21} = \sigma(\RR)$ with the transposition map $\sigma(\xi\otimes\zeta) = \zeta\otimes \xi$. 
\end{defi}
\sk
\begin{ex}
The Hopf algebra $U\Xi$ of diffeomorphisms on a smooth manifold $M$ is triangular with
trivial $R$-matrix $\RR=1\otimes 1$. The Hopf algebra of twist deformed diffeomorphisms $U\Xi^\FF$ 
is triangular with $R$-matrix $\RR^\FF = \FF_{21}\,\FF^{-1}$.

In general, the twist deformation of a (quasi)triangular Hopf algebra $(H,\RR)$ is (quasi)triangular
$(H^\FF,\RR^\FF:=\FF_{21}\,\RR\,\FF^{-1})$.
\end{ex}
\sk

Standard properties of quasitriangular $R$-matrices are (see e.g.~\cite{Majid:1996kd})
\begin{subequations}\label{eqn:Rprop}
\begin{flalign}
 \qquad\,\qquad\,\qquad\,\qquad&(\epsilon\otimes\id)\RR =1~,\quad& &(\id\otimes \epsilon)\RR=1~,&\qquad\,\qquad\\
 \qquad\,\qquad\,\qquad\,\qquad&(S\otimes\id)\RR =\RR^{-1}~,\quad& &(\id\otimes S)\RR^{-1}=\RR~,&\qquad\,\qquad
\end{flalign}
and the quantum Yang-Baxter equation
\begin{flalign}
\RR_{12}\,\RR_{13}\,\RR_{23} = \RR_{23}\,\RR_{13}\,\RR_{12}~.
\end{flalign}
\end{subequations}
It is convenient to introduce the notations $\RR = R^\alpha\otimes R_\alpha$ and 
$\RR^{-1} = \bar R^\alpha\otimes \bar R_\alpha$ (sum over $\alpha$ understood)
for the $R$-matrix and its inverse. 

Let $(H,\RR)$ be a quasitriangular Hopf algebra and $V,W\in\MMMod{H}{}{}$ be left $H$-modules.
Then we can define a left $H$-module isomorphism (called {\bf braiding}) between the left $H$-modules
$V\otimes W$ and $W\otimes V$. The braiding map $\tau_\RR: V\otimes W\to W\otimes V$
is defined by, for all $v\in V$ and $w\in W$,
\begin{flalign}\label{eqn:braiding}
\tau_\RR(v\otimes w) := (\bar R^\alpha\ra w)\otimes (\bar R_\alpha\ra v)~.
\end{flalign}
The property (\ref{eqn:R1}) ensures that $\tau_\RR$ is a left $H$-module homomorphism.
Furthermore, it is invertible via the map $\tau_\RR^{-1}$ given by 
$\tau^{-1}_\RR(w\otimes v) = (R_\alpha\ra v)\otimes (R^\alpha\ra w)$.
From the quasitriangular properties (\ref{eqn:R2}) we obtain the braid relations on the triple tensor product
$V\otimes W\otimes Z$ (with $Z\in\MMMod{H}{}{}$)
\begin{flalign}
\tau_{\RR\,(12)3} = \tau_{\RR\, 12}\circ \tau_{\RR \, 23}~,\quad \tau_{\RR\,1(23)} = \tau_{\RR\,23}\circ \tau_{\RR\,12}~,
\end{flalign}
where the indices on $\tau_\RR$ label the legs of the tensor product on which $\tau_\RR$ is acting on, 
e.g.~$\tau_{\RR\,23}(v\otimes w\otimes z) = v\otimes (\bar R^\alpha\ra z)\otimes (\bar R_\alpha\ra w)$
and $\tau_{\RR\,(12)3}(v\otimes w\otimes z) = (\bar R^\alpha\ra z)\otimes \bar R_\alpha\ra(v\otimes w)$.

\subsection{Tensor product over $\bbK$}
In order to resolve the $H$-noncovariance of the usual tensor product
of $\bbK$-linear maps (\ref{eqn:naivetensor}) we make the following
\begin{defi}
Let $(H,\RR)$ be a quasitriangular Hopf algebra and $V,W,\widetilde{V},\widetilde{W}\in\MMMod{H}{}{}$.
The {\bf $R$-tensor product} of $\bbK$-linear maps is defined by, for all
$P\in\Hom_\bbK(V,\widetilde{V})$ and $Q\in\Hom_\bbK(W,\widetilde{W})$,
\begin{flalign}\label{eqn:Rtensor}
P\otimes_\RR Q := (P\circ \bar R^\alpha\ra\,)\otimes (\bar R_\alpha\RA Q)~,
\end{flalign}
where $\otimes$ is defined in (\ref{eqn:naivetensor}).
\end{defi}
\sk
This product already appeared in \cite{Majid:1996kd}, Chapter 9.3,  where however only some of its properties
have been studied. In particular, the twist deformation to be studied later in this subsection and the reduction 
to tensor products over $A$ in the next subsection is to our best knowledge new material, which is
 for the first time presented in \cite{ASCHIERISCHENKEL,SCHENKELDISS}.

We can rewrite (\ref{eqn:Rtensor}) in terms of the braiding map (\ref{eqn:braiding})
and its inverse as follows
\begin{flalign}
P\otimes_\RR Q = (P\otimes_\RR\id)\circ (\id\otimes_\RR Q)=(P\otimes\id)\circ \tau_\RR\circ (Q\otimes\id)\circ \tau_\RR^{-1}~.
\end{flalign}
From this expression we obtain that the lift of $P$ to the tensor product module
$V\otimes \widetilde{W}$ is simply $P\mapsto P\otimes_\RR\id = P\otimes\id$, while the lift 
of $Q$, given by $Q\mapsto \id\otimes_\RR Q = \tau_\RR\circ (Q\otimes \id)\circ\tau_\RR^{-1}$, requires
a braiding of the module $W$ to the very left before acting with $Q$.

The ordering problem (\ref{eqn:Hactionnaivetensor}) is resolved for the $R$-tensor product
and furthermore $\otimes_\RR$ is associative and satisfies a braided composition law.
\begin{propo}\label{propo:Rtensorproperties}
Let $(H,\RR)$ be a quasitriangular Hopf algebra and $V,W,Z,\widetilde{V},\widetilde{W},\widetilde{Z},\widehat{V}
,\widehat{W}\in\MMMod{H}{}{}$ be left $H$-modules. 
The $R$-tensor product is compatible with the left $H$-module structure, i.e., for all
$\xi\in H$, $P\in\Hom_\bbK(V,\widetilde{V})$ and $Q\in\Hom_\bbK(W,\widetilde{W})$,
\begin{subequations}
\begin{flalign}
\xi\RA(P\otimes_\RR Q) =( \xi_1\RA P)\otimes_\RR (\xi_2\RA Q)~.
\end{flalign}
Furthermore, $\otimes_\RR$ is associative, for all $P\in\Hom_\bbK(V,\widetilde{V})$,
$Q\in\Hom_\bbK(W,\widetilde{W})$ and $T\in\Hom_\bbK(Z,\widetilde{Z})$,
\begin{flalign}
(P\otimes_\RR Q)\otimes_\RR T = P\otimes_\RR(Q\otimes_\RR T)~,
\end{flalign}
and satisfies the composition law, for all $P\in\Hom_\bbK(V,\widetilde{V})$,
$Q\in\Hom_\bbK(W,\widetilde{W})$, $\widetilde{P}\in\Hom_\bbK(\widetilde{V},\widehat{V})$
and $\widetilde{Q}\in\Hom_\bbK(\widetilde{W},\widehat{W})$,
\begin{flalign}\label{eqn:compo}
(\widetilde{P}\otimes_\RR\widetilde{Q})\circ (P\otimes_\RR Q) = \big(\widetilde{P}\circ (\bar R^\alpha\RA P)\big)
\otimes_\RR \big( (\bar R_\alpha\RA \widetilde{Q})\circ Q\big)~.
\end{flalign}
\end{subequations}
\end{propo}
The tensor product $\otimes_\RR$ is also compatible with right $A$-linearity:
\begin{propo}
In the hypotheses of Proposition \ref{propo:Rtensorproperties} let $W,\widetilde{W}\in\MMMod{H}{}{A}$
be left $H$-module right $A$-modules with $A\in\AAAlg{H}{}{}$. Then for all
$P\in\Hom_\bbK(V,\widetilde{V})$ and all right $A$-linear $Q\in\Hom_A(W,\widetilde{W})$ we have
\begin{flalign}
P\otimes_\RR Q\in\Hom_A(V\otimes W,\widetilde{V}\otimes \widetilde{W})~.
\end{flalign}
\end{propo}
A proof of these propositions is given in \cite{ASCHIERISCHENKEL,SCHENKELDISS}.

It remains to study the behavior of $\otimes_\RR$ under twist quantization.
Let $\FF\in H\otimes H$ be a twist of the quasitriangular Hopf algebra $(H,\RR)$.
Then the deformed Hopf algebra $H^\FF$ is again quasitriangular with $R$-matrix
$\RR^\FF = \FF_{21}\,\RR\,\FF^{-1}$.
Given two left $H$-modules $V,W\in\MMMod{H}{}{}$, then $V$ and $W$ are automatically
left $H^\FF$-modules, since $H$ and $H^\FF$ are equal as algebras, $V,W\in\MMMod{H^\FF}{}{}$.
Analogously, $V\otimes W\in\MMMod{H}{}{}$ is also a left $H^\FF$-module.
Notice however that the definition of the left $H$-action (and thus also the left $H^\FF$-action)
on $V\otimes W$ uses the coproduct in $H$, see (\ref{eqn:Hactiontensor}).
We can induce another left $H^\FF$-action (and thus also a left $H$-action) on the
$\bbK$-module $V\otimes W$  by using the coproduct in $H^\FF$. To distinguish
the resulting left $H^\FF$-module from $V\otimes W\in\MMMod{H^\FF}{}{}$
we denote it by $V\otimes_\star W\in\MMMod{H^\FF}{}{}$. 
The image of $(v,w)$ under the canonical map $V\times W\to V\otimes_\star W$
is denoted by $v\otimes_\star w$. By definition, the left $H^\FF$-action reads, 
for all $\xi\in H^\FF$, $v\in V$ and $w\in W$,
\begin{flalign}
\xi\ra_\FF(v\otimes_\star w) = (\xi_{1_\FF}\ra v )\otimes_\star (\xi_{2_\FF}\ra w)~,
\end{flalign}
where we have used the notation $\Delta^\FF(\xi)=\xi_{1_\FF}\otimes\xi_{2_\FF}$.

The left $H^\FF$-modules $V\otimes W\in\MMMod{H^\FF}{}{}$ and $V\otimes_\star W\in\MMMod{H^\FF}{}{}$
are isomorphic via the map $\varphi: V\otimes_\star W \to V\otimes W$ defined by, for all $v\in V$ and $w\in W$,
\begin{flalign}
\varphi(v\otimes_\star w) := (\bar f^\alpha\ra v) \otimes (\bar f_\alpha\ra w)~.
\end{flalign}
Compare this also with Lemma \ref{lem:varphi} which discusses the corresponding induced
 map on the quotient modules (i.e.~tensor product over $A$, respectively $A_\star$).
 We obtain the following deformation behavior of the $R$-tensor product:
 \begin{theo}\label{theo:deformationproducthom}
 Let $(H,\RR)$ be a quasitriangular Hopf algebra with twist $\FF\in H\otimes H$ 
 and $V,W,\widetilde{V},\widetilde{W}\in\MMMod{H}{}{}$.
 Then for all $P\in\Hom_\bbK(V,\widetilde{V})$ and $Q\in\Hom_\bbK(W,\widetilde{W})$ the following
 diagram commutes
 \begin{flalign}
 \xymatrix{
V\otimes_\star W \ar[d]_-{\varphi}\ar[rrrr]^-{D_\FF(P)\,\otimes_{\RR^\FF} \,D_\FF(Q)}&&&& \widetilde{V}\otimes_\star\widetilde{W}\ar[d]^-{\varphi}\\
V\otimes W \ar[rrrr]_-{D_\FF\big( (\bar f^\alpha\RA P)\,\otimes_\RR\, (\bar f_\alpha\RA Q) \big)} &&&& \widetilde{V}\otimes \widetilde{W}
 }
 \end{flalign}
 where $\otimes_{\RR^\FF}$ is the $R$-tensor product with respect to the Hopf algebra $(H^\FF,\RR^\FF)$ 
 and $D_\FF$ is the quantization isomorphism of Theorem \ref{theo:quanthomo}.
 \end{theo}
 The proof can be found in \cite{ASCHIERISCHENKEL,SCHENKELDISS}.

\subsection{Tensor product over $A$}
Let $(H,\RR)$ be a quasitriangular Hopf algebra, $A\in\AAAlg{H}{}{}$, $V,\widetilde{V}\in\MMMod{H}{}{A}$
and $W,\widetilde{W}\in\MMMod{H}{A}{A}$.
We now investigate under  which conditions the $R$-tensor product of $\bbK$-linear maps
(\ref{eqn:Rtensor}) gives rise to a tensor product of right $A$-linear maps
$P\in\Hom_A(V,\widetilde{V})$ and $Q\in\Hom_A(W,\widetilde{W})$ on $V\otimes_A W$.
Remember that $V\otimes_A W$ can be defined as the quotient of $V\otimes W$ by the submodule
generated by the elements $v\cdot a\otimes w - v\otimes a\cdot w $, for all $v\in V$, $w\in W$ and $a\in A$.

It turns out that we require in addition to the quasitriangular structure on $H$ a quasi-commutative structure
on $A$, $W$ and $\widetilde{W}$.
\begin{defi}
Let $(H,\RR)$ be a quasitriangular Hopf algebra.
We say that a left $H$-module algebra $A\in\AAAlg{H}{}{}$ is {\bf quasi-commutative} if, for all $a,b\in A$,
\begin{flalign}
a\,b = (\bar R^\alpha\ra b)\,(\bar R_\alpha\ra a)~.
\end{flalign}
A left $H$-module $A$-bimodule $W\in\MMMod{H}{A}{A}$ is {\bf quasi-commutative} if, for all $w\in W$ and $a\in A$,
\begin{flalign}\label{eqn:modqc}
v\cdot a = (\bar R^\alpha\ra a)\cdot(\bar R_\alpha\ra v)~.
\end{flalign}
\end{defi}
\sk
It can be shown that tensor products $W\otimes_A \widetilde{W}$ of, and homomorphisms
$\Hom_A(W,\widetilde{W})$ between quasi-commutative left $H$-module $A$-bimodules $W,\widetilde{W}$
over quasi-commutative  left $H$-module algebras $A$ are quasi-commutative left $H$-module $A$-bimodules 
\cite{ASCHIERISCHENKEL,SCHENKELDISS}.
\begin{ex}
Let $U\Xi$ be the Hopf algebra of diffeomorphisms on a manifold $M$. The $R$-matrix in this case
is $\RR=1\otimes 1$, i.e.~$U\Xi$ is cocommutative. The algebra of functions $C^\infty(M)$
is a quasi-commutative left $U\Xi$-module algebra and the left $U\Xi$-module $C^\infty(M)$-bimodules
of one-forms $\Omega$ and vector fields $\Xi$ are quasi-commutative as well.

The twist deformed algebra of functions $C^\infty(M)_\star$ and the twist quantized modules
of one-forms $\Omega_\star$ and vector fields $\Xi_\star$ are quasi-commutative with
 respect to the deformed Hopf algebra of diffeomorphisms
$U\Xi^\FF$ and the $R$-matrix $\RR^\FF = \FF_{21}\,\FF^{-1}$.

In general, twist deformations of quasi-commutative algebras and bimodules are again quasi-commutative.
\end{ex}
\sk
Using quasi-commutativity we can restrict the $R$-tensor product to tensor product modules over $A$.
\begin{theo}
Let $(H,\RR)$ be a quasitriangular Hopf algebra, $A\in\AAAlg{H}{}{}$, $V,\widetilde{V}\in\MMMod{H}{}{A}$
and $W,\widetilde{W}\in\MMMod{H}{A}{A}$. Let also $A$, $W$ and $\widetilde{W}$ be quasi-commutative.
Then for all $P\in\Hom_A(V,\widetilde{V})$ and $Q\in\Hom_A(W,\widetilde{W})$ the 
map $P\otimes_\RR Q \in\Hom_A(V\otimes W,\widetilde{V}\otimes \widetilde{W})$ defined in (\ref{eqn:Rtensor})
induces a well-defined map between the quotient modules (which we denote with a slight abuse of notation by the same symbol)
\begin{flalign}
P\otimes_\RR Q \in\Hom_A(V\otimes_A W,\widetilde{V}\otimes_A\widetilde{W})~.
\end{flalign}
Explicitly, we have for all $v\in V$ and $w\in W$,
\begin{flalign}
P\otimes_\RR Q (v\otimes_A w) = P(\bar R^\alpha\ra v)\otimes_A (\bar R_\alpha\RA Q)(w)~.
\end{flalign}
This tensor product of right $A$-linear maps is compatible with the left $H$-action, it is associative and it
satisfies the composition property (\ref{eqn:compo}).
\end{theo}
Again, the proof is found in \cite{ASCHIERISCHENKEL} and for the special case of triangular Hopf algebras in \cite{SCHENKELDISS}.
There we also studied the twist quantization and an analogous commuting diagram as in Theorem
\ref{theo:deformationproducthom} is found for the $R$-tensor product of right $A$-linear maps:
\begin{flalign}
 \xymatrix{
V_\star\otimes_{A_\star} W_\star \ar[d]_-{\varphi}\ar[rrrr]^-{D_\FF(P)\,\otimes_{\RR^\FF} \,D_\FF(Q)}&&&& \widetilde{V}_\star \otimes_{A_\star}\widetilde{W}_\star\ar[d]^-{\varphi}\\
(V\otimes_A W)_\star \ar[rrrr]_-{D_\FF\big( (\bar f^\alpha\RA P)\,\otimes_\RR\, (\bar f_\alpha\RA Q) \big)} &&&& (\widetilde{V}\otimes_A \widetilde{W})_\star
 }
 \end{flalign}


\section{\label{sec:conlift}Connections on tensor product modules}
Motivated by the investigations in the last section we now study the extension of
connections to tensor product modules. This is of particular importance in noncommutative gravity, since
it allows us to define connections on noncommutative tensor fields in terms of a fundamental connection 
on vector fields or one-forms.

We again require a quasitriangular Hopf algebra $(H,\RR)$ and a quasi-commutative 
left $H$-module algebra $A\in\AAAlg{H}{}{}$. We further assume a {\bf graded quasi-commutative}
left $H$-covariant differential calculus $(\Omega^\bullet,\wedge,\dd)$ over $A$, 
i.e.~for all $\omega,\omega^\prime\in\Omega^\bullet$ of homogeneous degree we demand
\begin{flalign}
\omega\wedge \omega^\prime = (-1)^{\deg(\omega)\,\deg(\omega^\prime)}\,(\bar R^\alpha\ra \omega^\prime)\wedge (\bar R_\alpha\ra \omega)~.
\end{flalign}
An example for a graded quasi-commutative  left $H$-covariant differential calculus
can be obtained by deforming the classical differential calculus on a manifold (see Example
\ref{ex:dga}) by using Lemma \ref{lem:dgadef}. This is the differential calculus of noncommutative
gravity.

Before studying the extension of connections to tensor product modules it is instructive 
to present and discuss the following
\begin{lem}\label{lem:braidedleftleibniz}
Let $(H,\RR)$ be a quasitriangular Hopf algebra, $A\in\AAAlg{H}{}{}$ be quasi-commutative
 and let $(\Omega^\bullet,\wedge,\dd)$ be a graded quasi-commutative left 
 $H$-covariant differential calculus over $A$. Then any right connection 
 $\nab\in\Con_A(W)$ on a quasi-commutative left $H$-module $A$-bimodule
 $W\in\MMMod{H}{A}{A}$ satisfies the braided  left Leibniz rule, for all $w\in W$ and
 $a\in A$,
 \begin{flalign}
 \nn \nab(a\cdot w) &= (\bar R^\alpha\ra a)\cdot (\bar R_\alpha\RA\nab)(w) + (R_\alpha\ra w)\otimes_A (R^\alpha\ra \dd a)\\
 \label{eqn:braidleftleib}&=(\bar R^\alpha\ra a)\cdot (\bar R_\alpha\RA\nab)(w) +\tau_\RR^{-1}(\dd a\otimes_A w)~.
 \end{flalign}
\end{lem}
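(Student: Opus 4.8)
The plan is to convert the left $A$-multiplication $a\cdot w$ into a right $A$-multiplication by means of the braiding, so that the \emph{right} Leibniz rule becomes the only property of $\nab$ we need. Since $W$ is quasi-commutative, relation (\ref{eqn:modqc}) is equivalent to $a\cdot w = (R_\alpha\ra w)\cdot(R^\alpha\ra a)$: this is just (\ref{eqn:modqc}) read through the inverse braiding, as one sees by composing the bimodule identity $w\cdot a = (\bar R^\alpha\ra a)\cdot(\bar R_\alpha\ra w)$ with $\tau_\RR^{-1}$ and using $\RR^{-1}\RR = 1\otimes 1$. First I would insert this rewriting into $\nab(a\cdot w)$ and apply the right Leibniz rule (\ref{eqn:leibnizcon}) with right-multiplier $R^\alpha\ra a\in A$, obtaining
\[
\nab(a\cdot w) = \nab(R_\alpha\ra w)\cdot(R^\alpha\ra a) + (R_\alpha\ra w)\otimes_A\dd(R^\alpha\ra a).
\]

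The differential term is immediate: by $H$-equivariance of $\dd$ one has $\dd(R^\alpha\ra a) = R^\alpha\ra\dd a$, so the second summand equals $(R_\alpha\ra w)\otimes_A(R^\alpha\ra\dd a)$, which is precisely $\tau_\RR^{-1}(\dd a\otimes_A w)$ by the definition of the inverse braiding. This reproduces the second terms on both lines of (\ref{eqn:braidleftleib}) at once, and reduces the Lemma to identifying the first summand $\nab(R_\alpha\ra w)\cdot(R^\alpha\ra a)$ with $(\bar R^\beta\ra a)\cdot(\bar R_\beta\RA\nab)(w)$.

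For this identification I would expand the adjoint action on the target as $(\bar R_\beta\RA\nab)(w) = \bar R_{\beta_1}\ra\nab\big(S(\bar R_{\beta_2})\ra w\big)$ and apply the coproduct identity $(\id\otimes\Delta)\RR^{-1} = \RR_{12}^{-1}\,\RR_{13}^{-1}$, that is $\bar R^\beta\otimes\bar R_{\beta_1}\otimes\bar R_{\beta_2} = \bar R^\alpha\bar R^\gamma\otimes\bar R_\alpha\otimes\bar R_\gamma$, followed by $(\id\otimes S)\RR^{-1} = \RR$ in the form $\bar R^\gamma\otimes S(\bar R_\gamma) = R^\gamma\otimes R_\gamma$. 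These two steps turn the target into $(\bar R^\alpha R^\gamma\ra a)\cdot\big(\bar R_\alpha\ra\nab(R_\gamma\ra w)\big)$. On the other side I would move the factor $R^\alpha\ra a$ back to the left in $\nab(R_\alpha\ra w)\cdot(R^\alpha\ra a)$ using quasi-commutativity of the codomain bimodule $W\otimes_A\Omega$, which holds because $\Omega$ is quasi-commutative (the bimodule relation (\ref{eqn:modqc}) for $\Omega$ is the degree $(1,0)$ instance of graded quasi-commutativity, where the sign is trivial) and tensor products of quasi-commutative bimodules are quasi-commutative. This yields $\big((\bar R^\delta R^\alpha)\ra a\big)\cdot\big(\bar R_\delta\ra\nab(R_\alpha\ra w)\big)$, which agrees with the rewritten target after relabeling $\delta,\alpha\mapsto\alpha,\gamma$.

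The main obstacle is exactly this last matching: organizing the $R$-matrix bookkeeping so that the adjoint action $\bar R_\beta\RA\nab$ of the statement is recovered from the raw expression $\bar R_\delta\ra\nab(R_\alpha\ra w)$. The only nontrivial inputs are the two quasitriangularity identities above together with the quasi-commutativity of $W\otimes_A\Omega$; everything else is $\bbK$-linearity, the right Leibniz rule and equivariance of $\dd$. I would guard against errors in the index contractions by checking beforehand that $\bar R^\beta R^\alpha\otimes\bar R_\beta R_\alpha = \RR^{-1}\RR = 1\otimes 1$, so that no spurious $R$-matrices can survive the composition.
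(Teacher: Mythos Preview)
Your proof is correct and follows essentially the same route as the paper: rewrite $a\cdot w$ via quasi-commutativity, apply the right Leibniz rule and $H$-equivariance of $\dd$, then use quasi-commutativity of $W\otimes_A\Omega$ together with the identities $(\id\otimes\Delta)\RR^{-1}=\RR_{12}^{-1}\RR_{13}^{-1}$ and $(\id\otimes S)\RR^{-1}=\RR$ to recognize the adjoint action. The only cosmetic difference is that the paper runs the computation forward from $\nab(R_\alpha\ra w)\cdot(R^\alpha\ra a)$ to $(\bar R^\alpha\ra a)\cdot(\bar R_\alpha\RA\nab)(w)$, whereas you expand both sides and match them in the middle at $(\bar R^\alpha R^\gamma\ra a)\cdot\bar R_\alpha\ra\nab(R_\gamma\ra w)$.
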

\begin{proof}
The proof follows from quasi-commutativity (\ref{eqn:modqc}), the right Leibniz rule of the connections (\ref{eqn:leibnizcon}),
$H$-equivariance of the differential and the $R$-matrix properties (\ref{eqn:R2}) and (\ref{eqn:Rprop}),
\begin{flalign}
\nn\nab(a\cdot w) &= \nab\big((R_\alpha\ra w )\cdot (R^\alpha\ra a)\big) = \nab(R_\alpha\ra w)\cdot (R^\alpha\ra a) + (R_\alpha\ra w)\otimes_A \dd(R^\alpha\ra a)\\
\nn &=(\bar R^\beta R^\alpha\ra a)\cdot \bar R_\beta\ra \nab(R_\alpha\ra w) + (R_\alpha\ra w)\otimes_A (R^\alpha\ra \dd a)\\
\nn &=(\bar R^\beta \bar R^\alpha\ra a)\cdot \bar R_\beta\ra \nab(S(\bar R_\alpha)\ra w) + (R_\alpha\ra w)\otimes_A (R^\alpha\ra \dd a)\\
\nn&= ( \bar R^\alpha\ra a)\cdot \bar R_{\alpha_1}\ra \nab(S(\bar R_{\alpha_2})\ra w) + (R_\alpha\ra w)\otimes_A (R^\alpha\ra \dd a)\\
&= ( \bar R^\alpha\ra a)\cdot (\bar R_{\alpha}\RA \nab)( w) + (R_\alpha\ra w)\otimes_A (R^\alpha\ra \dd a)~.
\end{flalign}
\end{proof}
\begin{rem}
Lemma \ref{lem:braidedleftleibniz} shows that a generic right connection on $W$
is in general not a bimodule connection in the sense of  \cite{Mourad:1994xa,DuViMasson,DuViLecture}. Remember that 
a right bimodule connection is defined to be a right connection
on an  $A$-bimodule $W$, such that {\it additionally}, for all $a\in A$ and $w\in W$,
\begin{flalign}\label{eqn:duvicon}
\nab(a\cdot w) = a\cdot \nab(w) + \sigma(\dd a\otimes_A w)~,
\end{flalign}
with some generalized braiding $\sigma: \Omega\otimes_A W \to W\otimes_A \Omega$.
Comparing our expression (\ref{eqn:braidleftleib}) with this equation we identify $\sigma = \tau_\RR^{-1}$, but the first
term does not match. Notice that in case the $R$-matrix acts trivially on $\nab$, such that
$\bar R^\alpha \otimes \bar R_\alpha\RA\nab = 1\otimes \nab$, the two formulae coincide. This shows that
equivariant connections are automatically bimodule connections.
For nonequivariant connections the $R$-matrix in the first term of (\ref{eqn:braidleftleib}) 
is naturally interpreted as a braiding between $\nab$ and $a$, taking care of the reordering.
Notice that we {\it did not} demand (\ref{eqn:braidleftleib}), but it
is a consequence of quasi-commutativity and holds {\it for all} connections $\nab\in\Con_A(W)$!
\end{rem}
\sk
The main reason why in \cite{Mourad:1994xa,DuViMasson,DuViLecture} the connections are assumed to
be bimodule connections is that (\ref{eqn:duvicon}) and the braiding property of $\sigma$ allows
one to define connections on tensor product modules $V\otimes_A W$ in terms of connections
on $V$ and $W$. For equivariant connections we could use the same extension formula as in
 \cite{Mourad:1994xa,DuViMasson,DuViLecture}.
 
 In the following we show that in our setting there is a generalization of the tensor product connection 
 of \cite{Mourad:1994xa,DuViMasson,DuViLecture}, which is based on (\ref{eqn:braidleftleib})  and
  applies to {\it all} connections (in particular also nonequivariant ones).
\begin{theo}\label{theo:conext}
Let $(H,\RR)$ be a quasitriangular Hopf algebra and $A\in\AAAlg{H}{}{}$, $W\in\MMMod{H}{A}{A}$ be quasi-commutative.
Let further $(\Omega^\bullet,\wedge,\dd)$ be a graded quasi-commutative left $H$-covariant differential calculus over $A$
and $V\in\MMMod{H}{}{A}$, $\nab_V\in\Con_A(V)$ and $\nab_W\in\Con_A(W)$.
Then the $\bbK$-linear map $\nab_V\oplus_\RR\nab_W : V\otimes_A W\to V\otimes_AW\otimes_A\Omega$
defined by, for all $v\in V$ and $w\in W$,
\begin{flalign}\label{eqn:oplus}
(\nab_V\oplus_\RR\nab_W)(v\otimes_A w) := \tau^{-1}_{\RR\,23}\big(\nab_V(v)\otimes_A w\big) 
+(\bar R^\alpha\ra v) \otimes_A (\bar R_\alpha\RA \nab_W)(w)
\end{flalign}
is well-defined and a connection on $V\otimes_A W$.
\end{theo}
 Again, if we assume as a very special case that  $\nab_W$ is equivariant, then our formula (\ref{eqn:oplus}) reduces to the one in 
 \cite{Mourad:1994xa,DuViMasson,DuViLecture}. 
 The $R$-sum of connections (\ref{eqn:oplus}) extends canonically to higher tensor products of modules.
 \begin{theo}
 In the hypotheses of Theorem \ref{theo:conext} let also $Z\in\MMMod{H}{A}{A}$ be quasi-commutative and
 $\nab_Z\in\Con_A(Z)$. Then we have on $V\otimes_AW\otimes_A Z$
 \begin{flalign}
 (\nab_V\oplus_\RR \nab_W )\oplus_\RR\nab_Z = \nab_V\oplus_\RR(\nab_W\oplus_\RR\nab_Z)~.
 \end{flalign}
 \end{theo}
 The proof of the two theorems can be found in \cite{ASCHIERISCHENKEL} and
  for the special case of triangular Hopf algebras in \cite{SCHENKELDISS}.
 
To conclude this section we investigate the twist deformation of the $R$-sum of connections.
\begin{theo}
In the hypotheses of Theorem \ref{theo:conext} let $\FF\in H\otimes H$ be a twist. Then the following diagram
commutes
\begin{flalign}
\xymatrix{
V_\star\otimes_{A_\star} W_\star\ar[d]_-{\varphi} \ar[rrrr]^-{\widetilde{D}_\FF(\nab_V) \oplus_{\RR^\FF}\widetilde{D}_\FF(\nab_W)} &&&& V_\star\otimes_{A_\star} W_\star\otimes_{A_\star} \Omega_\star\ar[d]^-{\varphi}\\
(V\otimes_A W)_\star \ar[rrrr]_-{D_\FF(\nab_V\oplus_\RR\nab_W)} &&&& (V\otimes_AW\otimes_A \Omega)_\star
}
\end{flalign}
where $\varphi$ is the isomorphism of Lemma \ref{lem:varphi}, 
$\widetilde{D}_\FF$ is the quantization isomorphism for connections of Theorem \ref{theo:conquant}
and $\oplus_{\RR^\FF}$ is the $R$-sum of connections with respect to the quasitriangular Hopf algebra
$(H^\FF,\RR^\FF)$.
\end{theo}
Also this theorem is proven in \cite{ASCHIERISCHENKEL,SCHENKELDISS}.


\appendix
\section*{Acknowledgements}
I would like to thank the organizers and participants
of the  \textit{Corfu Summer Institute 2011} for 
this very interesting conference. In particular, I thank  Tomasz Brzezi{\'n}ski and
Walter van Suijlekom for helpful discussions.
Additionally,  I am grateful to Thorsten Ohl and Christoph F.~Uhlemann for
useful discussions and comments on this work. This project was supported 
by  Deutsche Forschungsgemeinschaft through the Research 
Training Group GRK\,1147 \textit{Theoretical Astrophysics and Particle Physics},
the ESF activity ``Quantum Geometry and Quantum Gravity'' via the short visit grant 3267
and  INFN Torino gruppo collegato di Alessandria.


\end{document}